\documentclass[10pt]{amsart}

\usepackage[T1]{fontenc}

\usepackage{amsmath,amssymb,mathtools}

\usepackage{hyperref}
\hypersetup{hidelinks}

\newtheorem{theorem}{Theorem}[section]
\newtheorem{proposition}[theorem]{Proposition}
\newtheorem{lemma}[theorem]{Lemma}

\numberwithin{equation}{section}

\title[Disconnected graphs on $p$-regular class sizes]
{Solubility from a disconnected common-divisor graph on $p$-regular conjugacy-class sizes}

\author[Zhi-Lin Zhang]{Zhi-Lin Zhang}
\address{Independent Researcher, Taipei, Taiwan}

\email{hsa00000@gmail.com}
\date{}

\subjclass[2020]{Primary 20E45; Secondary 20D10, 20D60}
\keywords{finite groups, conjugacy class sizes, $p$-regular elements, common divisor graph, solubility}

\begin{document}

\begin{abstract}
Let $G$ be a finite group and let $p$ be a prime.  We prove that if the
common-divisor graph on the nontrivial conjugacy-class sizes of $p$-regular
elements of $G$ is disconnected, then $G$ is soluble, resolving the remaining
case left by Camina, Mar\'{o}ti, Pacifici, Parker, Rekv\'{e}nyi, Saunders,
Sotomayor, Tracey and van Beek.  For a $p$-regular conjugacy class $B$ of
maximal size, their structure results provide an abelian normal $p'$-subgroup
$M$.  We choose a noncentral $r$-element $x\in M$ such that $r\mid |B|$ and
$\gcd(|x^G|,pr)=1$, and prove that $C_G(x)/M$ is a $\{p,r\}$-group.  For
$H=G/C_G(M)$, acting faithfully on $M$ by conjugation, it follows that the
stabilizer $H_x$ of $x$ is a Hall $\{p,r\}$-subgroup and that
every $p'$-element of $H$ has conjugacy-class size whose prime divisors lie in
$\{p,r\}$.  The theorem of Dolfi and Lucido, together with Burnside's
$p^a q^b$-theorem, then excludes nonabelian composition factors of $H$.
\end{abstract}

\maketitle

\section{Introduction}

Let $G$ be a finite group and let $p$ be a prime.  An element of $G$ is
\emph{$p$-regular} if its order is coprime to $p$.  For $x,g\in G$, put
$x^g=g^{-1}xg$ and write $x^G=\{x^g:g\in G\}$ for the conjugacy class of
$x$.  Its stabilizer is $C_G(x)=\{g\in G:x^g=x\}$, and
$|x^G|=[G:C_G(x)]$.  The graph $\Gamma_p(G)$ has as vertices the integers
$|x^G|>1$ arising from $p$-regular elements $x\in G$.  Distinct vertices are
adjacent when they have a common prime divisor.

Because $|x^G|=[G:C_G(x)]$, a prime $q$ does not divide $|x^G|$ exactly
when $C_G(x)$ contains a Sylow $q$-subgroup of $G$.  Thus the prime support of
a $p$-regular class size records, prime by prime, where Sylow subgroups fail
to centralize the element.  If $\Gamma_p(G)$ is disconnected, these
centralizer-index obstructions split into families with disjoint prime support.
The question is whether this forces solubility without assuming
$p$-solubility.

For $p$-soluble groups, the disconnected case was analysed by Beltr\'{a}n and
Felipe~\cite{BeltranFelipe2004}.  Camina, Mar\'{o}ti, Pacifici, Parker,
Rekv\'{e}nyi, Saunders, Sotomayor, Tracey and van Beek later removed the
$p$-solubility hypothesis from much of that analysis~\cite{CaminaEtAl2026}.
Their Corollaries~4.1--4.3 settle all cases except when $p$ divides a vertex in
the component containing a largest $p$-regular conjugacy-class size; see their
Remark~4.4.  By settling this remaining case, we show that the arithmetic
separation encoded by $\Gamma_p(G)$ forces solubility for arbitrary finite
groups, without a $p$-solubility hypothesis.

\begin{theorem}\label{thm:main}
Let $G$ be a finite group and let $p$ be a prime.  If $\Gamma_p(G)$ is
disconnected, then $G$ is soluble.
\end{theorem}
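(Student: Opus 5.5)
The plan follows the outline in the abstract.

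\textbf{Reduction.} By Corollaries~4.1--4.3 of \cite{CaminaEtAl2026}, only one configuration remains. Fix a $p$-regular class $B$ of maximal size. The residual case is the one in which $p$ divides some vertex of the component $\Delta$ of $\Gamma_p(G)$ containing $|B|$. I will use the structural output of \cite{CaminaEtAl2026} for this situation. It provides an abelian normal $p'$-subgroup $M$ of $G$ that is not central. Moreover, every $p$-regular class size in the other component $\Delta'$ arises from elements of $M$, or is controlled by them. In particular, there is a noncentral $p$-regular $x$ whose class size lies in $\Delta'$, so $\gcd(|x^G|,|B|)=1$.

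\textbf{Choosing $x$.} Pick a prime $r\mid |B|$ and decompose a noncentral element of $M$ with class size in $\Delta'$ into its primary components. Some $r$-part $x$ is then noncentral. Since $C_G(y)\le C_G(y_r)$ for any $y$, we get $|x^G|$ dividing $|y^G|$, so $|x^G|$ still lies in $\Delta'$. Hence $\gcd(|x^G|,pr)=1$; the factor $p$ is excluded because $p\mid$ some vertex of $\Delta$ and $\Delta$ is connected with $|B|$. This is the point where I would have to choose carefully, and it may require $p\mid|B|$ or an adjustment of $r$.

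\textbf{Key step: $C_G(x)/M$ is a $\{p,r\}$-group.} Take a $p$-regular $q$-element $g\in C_G(x)$ with $q\ne p,r$. Consider the element $xg$, or $bx$ for $b\in B$ suitably chosen in $C_G(x)$. By the standard argument with maximal class size, $C_G(xg)=C_G(x)\cap C_G(g)$. So $|(xg)^G|$ is divisible by both $|x^G|$ and $|g^G|$. If $g\notin M$ (or more precisely $g$ is noncentral modulo the relevant layer), then $|g^G|$ shares primes with $\Delta$, and combining with $|x^G|$ connects the two components, a contradiction. For the Sylow-$q$ containment I would use that $|x^G|$ is coprime to $r$ and $p$, and then Sylow conjugation inside $C_G(x)$ to show every $q$-element of $C_G(x)$ lies in $M$. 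I expect this to be the main obstacle. It needs the maximality of $|B|$, in the form $|(bx)^G|\ge|B|$ forcing $C_G(bx)=C_G(b)\cap C_G(x)$ with $C_G(b)$ containing a Sylow $q$-subgroup of $C_G(x)$ up to conjugacy, and making this rigorous is delicate.

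\textbf{Conclusion.} Let $H=G/C_G(M)$ act faithfully on $M$. Since $M\le C_G(M)$, the stabilizer $H_x$ is the image of $C_G(x)$, hence a $\{p,r\}$-group. Its index $|x^G|$ is coprime to $pr$, so $H_x$ is a Hall $\{p,r\}$-subgroup of $H$. Every $p'$-element $h$ of $H$ is then conjugate into $H_x$, so $h$ is an $r$-element. Moreover, $C_H(h)$ contains a conjugate of the full Hall $\{p,r\}'$-complement structure, so $|h^H|$ has prime divisors only in $\{p,r\}$. By the theorem of Dolfi and Lucido on groups in which $p'$-element class sizes involve few primes, a nonabelian composition factor of $H$ would have to be one of a restricted list, and that list is excluded by Burnside's $p^aq^b$-theorem. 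Therefore $H$ is soluble. Finally, $C_G(M)/M$ embeds in $C_G(x)/M$ (since $M$ is abelian, $C_G(M)\le C_G(x)$), so it is a $\{p,r\}$-group and hence soluble by Burnside. With $M$ abelian, this shows $G$ is soluble.
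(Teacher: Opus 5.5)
Your architecture matches the paper's: the subgroup $M$ from Camina et al., an $r$-element $x\in M$ with $\gcd(|x^G|,pr)=1$, $H=G/C_G(M)$ with Hall stabilizer $H_x$, then Dolfi--Lucido and Burnside. However, the step that supplies the Dolfi--Lucido hypothesis fails.

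\textbf{The class sizes in $H$.} Your claim that every $p'$-element of $H$ is conjugate into $H_x$ is false. Since $[H:H_x]=|x^G|>1$ is a $\{p,r\}'$-number, some prime $q\notin\{p,r\}$ divides $|H|$. An element of order $q$ is then a $p'$-element lying in no $\{p,r\}$-subgroup. Your follow-up about $C_H(h)$ containing a ``Hall $\{p,r\}'$-complement structure'' has no basis either: $H$ is not yet known to be soluble, so it need not even have Hall $\{p,r\}'$-subgroups.

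What you need is that $q\nmid|h^H|$ for every $p'$-element $h\in H$ and every $q\notin\{p,r\}$. This does not follow from the Hall property; it has to come from the disconnection again. The paper argues as follows. For $h\ne1$, take the $p'$-part $y$ of any preimage of $h$; its $p$-part maps into the $p'$-group $\langle h\rangle$, hence lies in $K=C_G(M)$, so $y$ is a $p$-regular preimage of $h$. Since $y\notin K\supseteq MZ(G)$, $y$ is noncentral and not in $M$. Hence $|y^G|$ is not coprime to $|B|$ and lies in the component $\Delta$ of $|B|$. Now $|h^H|$ divides $|y^G|$, while every $q\in\pi(H)\setminus\{p,r\}$ divides $[H:H_x]=|x^G|\in\Delta'$. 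So $q\nmid|h^H|$.

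Dolfi--Lucido is then applied prime by prime, not via a ``restricted list''. For each such $q$, $O^{q'}(H)$ is soluble. So a nonabelian composition factor $S$ of $H$ is a composition factor of the $q'$-group $H/O^{q'}(H)$. This gives $\pi(S)\subseteq\{p,r\}$, contradicting Burnside.

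\textbf{The two steps you flagged.} Both are incomplete but repairable from the cited structure result.

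For the choice of $x$, you cannot fix $r\mid|B|$ first and expect the $r$-part to be noncentral; reverse the order. Take $d\in M$ with $|d^G|\in\Delta'$ and let $x=d_r$ be any noncentral primary component. Then use $Z_{p'}\le M$ and $\pi(M/Z_{p'})\subseteq\pi(|B|)$ (Camina et al., Corollary~4.2). Since $M\cap Z(G)=Z_{p'}$, the image of $x$ in $M/Z_{p'}$ is a nontrivial $r$-element, so $r\mid|B|$. Also, $p\nmid|x^G|$ needs only that $p$ divides some vertex of $\Delta$, not that $p\mid|B|$.

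For the key step, neither the maximality of $|B|$ nor Sylow conjugation is needed. Suppose $s\notin\{p,r\}$ divides $|C_G(x)/M|$. Then a Sylow $s$-subgroup of $C_G(x)$ is not contained in $M$, giving an $s$-element $y\in C_G(x)\setminus M$. It is noncentral, since otherwise $y\in Z_{p'}\le M$. Also $|y^G|\in\Delta$, since otherwise $y^G$ would be one of the generating classes of $M$. As $x$ and $y$ commute and have coprime orders, $C_G(xy)=C_G(x)\cap C_G(y)$. So the $p$-regular vertex $|(xy)^G|$ is divisible by vertices from both components, a contradiction. This is your $xg$ sketch made precise.
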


For a maximal $p$-regular class $B$, the cited structure theorem provides an
abelian normal $p'$-subgroup $M$ generated by the $p$-regular conjugacy classes
whose sizes are coprime to $|B|$.  The proof selects a noncentral $r$-element
$x\in M$, obtained as a prime-power component of an element whose class-size
vertex lies in the other component, such that
$r\mid |B|$ and $\gcd(|x^G|,pr)=1$.  We then show that $C_G(x)/M$ is a
${p,r}$-group.  Passing to $H=G/C_G(M)$, acting by conjugation on $M$, this
makes the stabilizer $H_x$ of $x$ a Hall
${p,r}$-subgroup of $H$ and forces every $p'$-element of $H$ to have
conjugacy-class size whose prime divisors lie in ${p,r}$.  The
Dolfi--Lucido theorem and Burnside's $p^a q^b$-theorem then exclude
nonabelian composition factors of $H$.

\section{Results from the literature}
\label{sec:literature}

For a positive integer $n$, let $\pi(n)$ denote its set of prime divisors; for
a finite group $K$, put $\pi(K)=\pi(|K|)$.  If $\sigma$ is a set of primes, a
positive integer or finite group is a \emph{$\sigma$-number} or
\emph{$\sigma$-group}, respectively, when all its prime divisors lie in
$\sigma$.  We use $\sigma'$ for the complementary set of primes.  A Hall
$\sigma$-subgroup of a finite group is a $\sigma$-subgroup of
$\sigma'$-index.

Let $O_{p'}(G)$ be the largest normal $p'$-subgroup of $G$, and set
$Z_{p'}=Z(G)\cap O_{p'}(G)$.

\begin{proposition}[Structure of a disconnected $\Gamma_p(G)$]
\label{prop:disconnected-structure}
Suppose that $\Gamma_p(G)$ is disconnected.  Then it has exactly two connected
components, both complete.  Let $B$ be a $p$-regular conjugacy class of maximal
size and define
\[
 M=\left\langle
 D:\ D\text{ is a $p$-regular conjugacy class and }
       \gcd(|D|,|B|)=1
 \right\rangle.
\]
Then $M$ is an abelian normal $p'$-subgroup of $G$, and
\[
 Z_{p'}\le M,
 \qquad
 \pi(M/Z_{p'})\subseteq \pi(|B|).
\]
Moreover, $G$ is soluble if either $p$ divides no vertex of $\Gamma_p(G)$ or
$p$ divides a vertex in the component not containing a largest $p$-regular
class size.
\end{proposition}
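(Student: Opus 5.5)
This proposition is stated as a summary of results from the literature, so my plan is to assemble it from the cited work~\cite{CaminaEtAl2026} (building on Beltr\'{a}n--Felipe~\cite{BeltranFelipe2004}) rather than to reprove everything from first principles. I will verify that the hypotheses of the relevant statements there match ours exactly. Those hypotheses are a finite group $G$, a prime $p$, and $\Gamma_p(G)$ disconnected, with no $p$-solubility assumption. I will also check that the conclusions translate into the notation $M$, $Z_{p'}$ and $B$ used here.

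The first step is the component count. I will quote the result that a disconnected $\Gamma_p(G)$ has exactly two connected components and that each is complete. The underlying idea is the standard one: if $x,y$ are $p$-regular with coprime class sizes, then $G=C_G(x)C_G(y)$. One then compares an arbitrary vertex with the vertex $|B|$ of maximal size and uses this factorisation to show that any vertex not adjacent to $|B|$ is coprime to every vertex in the component of $|B|$. This yields completeness of both components and rules out a third component.

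The second step is the structure of $M$. Normality of $M$ is immediate, since it is generated by a union of conjugacy classes. That $M$ is a $p'$-group, abelian, contains $Z_{p'}$, and satisfies $\pi(M/Z_{p'})\subseteq\pi(|B|)$ is the main structure theorem of \cite{CaminaEtAl2026}; this corresponds to the classical statement that the $p$-regular elements in the component not containing $|B|$ generate an abelian normal subgroup. Two points need care in the translation:
\begin{itemize}
\item the component of $|B|$ is complete, so a class size coprime to $|B|$ is exactly a vertex in the other component;
\item central $p$-regular classes (size $1$) are coprime to $|B|$ as well, which is why $Z_{p'}\le M$.
\end{itemize}

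The final step is the solubility clause, which is Corollaries~4.1--4.3 of \cite{CaminaEtAl2026}, as described in their Remark~4.4. I expect the main obstacle to be bookkeeping: the case split there is phrased in terms of where $p$ divides vertices. I need to confirm that the two cases ``$p$ divides no vertex'' and ``$p$ divides a vertex only in the component not containing a largest size'' are exactly the settled ones. Since both components are complete, $p$ can divide vertices in at most one component, so the three cases (no vertex, the other component, the component of $|B|$) are exhaustive and disjoint. The last of these is the case left open there and treated in the rest of this paper.
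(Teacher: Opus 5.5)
Your proposal takes the same route as the paper, which proves this proposition purely by citation: the two complete components come from \cite[Corollary~4.1]{CaminaEtAl2026}, the properties of $M$ from Corollary~4.2, and the solubility clause from Corollary~4.3, with normality of $M$ noted as immediate because its generating set is conjugation-invariant. Your bookkeeping remarks are correct: $Z_{p'}\le M$ because central $p$-regular classes have size $1$, and the three cases for $p$ are disjoint and exhaustive. Your heuristic sketch of the component count, however, overstates what it gives, since the single claim you describe would not by itself yield completeness of both components or exclude a third, so it is the citation, not the sketch, that carries that step.
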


The component statement is \cite[Corollary~4.1]{CaminaEtAl2026}, the
assertions about $M$ are \cite[Corollary~4.2]{CaminaEtAl2026}, and the final
solubility assertion is \cite[Corollary~4.3]{CaminaEtAl2026}.  The normality
of $M$ also follows directly because its defining set of generators is invariant
under conjugation.

For a finite group $Y$ and a prime $q$, let $O^{q'}(Y)$ be the subgroup
generated by the Sylow $q$-subgroups of $Y$; equivalently, it is the least
normal subgroup of $Y$ with $q'$-quotient.

\begin{theorem}[Dolfi--Lucido]\label{thm:Dolfi-Lucido}
Let $Y$ be a finite group and let $p\ne q$ be primes.  If
$q\nmid |y^Y|$ for every $p'$-element $y\in Y$, then $O^{q'}(Y)$ is soluble.
\end{theorem}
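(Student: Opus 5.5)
The plan is to argue by minimal counterexample and reduce to a statement about nonabelian simple groups. That statement will then be checked using the classification of finite simple groups, as in Dolfi and Lucido's original work. Since the paper quotes this result from the literature, what follows is the shape of the argument I would reconstruct.

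First I would rewrite the hypothesis locally. The condition $q\nmid |y^Y|$ says exactly that some Sylow $q$-subgroup of $Y$ centralizes $y$. This condition is inherited in two directions.
\begin{itemize}
\item By quotients $Y/K$: a $p'$-element of $Y/K$ lifts to a $p'$-element of $Y$ (take the $p'$-part of any preimage), and $|(yK)^{Y/K}|$ divides $|y^Y|$.
\item By normal subgroups $N$: the index $|y^N|$ divides $|y^Y|$.
\end{itemize}

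Next, suppose $O^{q'}(Y)$ is insoluble. I would show that some chief factor $N/K$ of $Y$ with $N\le O^{q'}(Y)$ is nonabelian with $q$ dividing its order. For this, let $R$ be the soluble radical of $O^{q'}(Y)$. The image of $O^{q'}(Y)$ in $Y/R$ is generated by $q$-elements and is nontrivial. Hence its minimal normal subgroups inside it, which are nonabelian, cannot all be $q'$-groups. If they were, consider a $q$-element acting on them: either it acts nontrivially, or else the layer it generates modulo these minimal normal subgroups forces $q$ to appear further up. Iterating in a minimal counterexample gives a chief factor $N/K\cong S^k$, with $S$ nonabelian simple and $q\mid |S|$, on which the hypothesis still holds by the inheritance above. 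This bookkeeping step needs some care, but it is standard.

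Now write $\bar Y=Y/K$, take a $p'$-element $x\in S$, and form $\bar x=(x,1,\dots,1)\in S^k$. By hypothesis some Sylow $q$-subgroup $\bar Q$ of $\bar Y$ centralizes $\bar x$. Then $\bar Q\cap S^k$ is a Sylow $q$-subgroup of $S^k$, so its projection to the first factor is a Sylow $q$-subgroup of $S$ centralizing $x$. Hence $q\nmid |x^S|$ for every $p'$-element $x$ of $S$. So it suffices to prove the following simple-group statement:
\begin{quote}
for every nonabelian simple group $S$ and primes $p\neq q$ with $q\mid|S|$, there is a $p'$-element $x\in S$ with $q\mid|x^S|$.
\end{quote}
Equivalently, some $p'$-element lies in no centralizer of a Sylow $q$-subgroup.

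This last step is the main obstacle and genuinely requires the classification. I would first try elements of prime order $r\notin\{p,q\}$ lying in a self-centralizing cyclic torus. Examples are Singer-type elements in groups of Lie type, long cycles in alternating groups, and known elements with small centralizers in sporadic groups. For such an element $C_S(x)$ is a cyclic $r$-group or is small, so $q\nmid |C_S(x)|$ and hence $q\mid |x^S|$. When $p$ happens to be that prime $r$, I would switch to a second torus of coprime order; Zsigmondy primes supply two tori of coprime orders in the Lie type case. I expect the few small exceptions (very small alternating groups and small-rank groups over tiny fields) to need a direct check.
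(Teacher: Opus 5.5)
\textbf{Gap: the reduction to a chief factor with $q\mid|S|$ is false.}
The paper gives no proof of this statement; it only quotes Theorem~3 of Dolfi and Lucido. Your reconstruction therefore has to stand on its own, and it breaks at the reduction step.

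You claim that if $O^{q'}(Y)$ is insoluble, then some nonabelian chief factor $N/K\cong S^k$ below it has $q\mid|S|$. You also say its nonabelian minimal normal subgroups ``cannot all be $q'$-groups.'' Both claims are false.
\begin{itemize}
\item Take $Y=A_5\wr C_7$ with $q=7$. The base $A_5^7$ is minimal normal and is not centralized by the $7$-elements, so $A_5^7\le O^{7'}(Y)=Y$ and $O^{7'}(Y)$ is insoluble. Yet the only nonabelian chief factor has order prime to $7$.
\item Take $Y=P\Gamma L_2(32)=L_2(32)\rtimes\langle\phi\rangle$ with $q=5$. Then $O^{5'}(Y)=Y$, and the unique nonabelian chief factor $L_2(32)$ has order $2^5\cdot3\cdot11\cdot31$.
\end{itemize}
In such groups $q$ enters only through $q$-elements acting coprimely on $S^k$, either by permuting the factors or by field automorphisms. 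This is exactly the ``either it acts nontrivially'' branch that your sketch mentions and then drops. Your simple-group statement assumes $q\mid|S|$, so it says nothing here.

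The hypothesis does fail in these groups, but the witnesses are of a different kind.
\begin{itemize}
\item In $P\Gamma L_2(32)$, take an element of order $31$ or $11$. It lies in no conjugate of $C_{L_2(32)}(\phi)\cong L_2(2)$, and every Sylow $5$-subgroup of $Y$ is conjugate to $\langle\phi\rangle$.
\item In wreath-type situations you need $(a_1,\dots,a_k)$ whose pattern of entries is stabilized by no Sylow $q$-subgroup of the permutation group induced on the factors. Single-support elements do not suffice. For example, in $A_5\wr AGL_1(8)$ with $q=7$ and coordinates indexed by $\mathbb{F}_8$, the element with one nontrivial entry at $0$ is centralized by the Sylow $7$-subgroup $\mathbb{F}_8^{\times}$.
\end{itemize}

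\textbf{What actually has to be proved.}
You need: under the hypothesis, every $q$-element of $Y$ centralizes every nonabelian chief factor $N/K$ of $Y$. This suffices. It implies $O^{q'}(Y)$ centralizes each such factor, so a nonabelian factor with $N\le O^{q'}(Y)$ would be central in $O^{q'}(Y)/K$, which is impossible.

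Besides your case $q\mid|S|$, proving this needs two further ingredients:
\begin{itemize}
\item a permutation-group (colouring) lemma for $q$-elements that move the factors;
\item an almost-simple version of your classification check for a coprime, hence outer, $q$-automorphism of $S$. These are field automorphisms of groups of Lie type, and Zsigmondy primes supply $p'$-elements lying outside every subfield subgroup.
\end{itemize}

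\textbf{A looser point in the CFSG sketch.}
Even when $q\mid|S|$, a self-centralizing torus containing $x$ need not be a $q'$-group; being ``small'' is not enough. So you must choose the torus to avoid $q$ as well as $p$. For instance, in $L_2(\ell)$ you choose among the split torus, the nonsplit torus and the unipotent elements according to where $p$ and $q$ lie.
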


This is the implication from \cite[Theorem~3]{DolfiLucido2001} needed here.
Its hypothesis is equivalent to requiring that $C_Y(y)$ contain a Sylow
$q$-subgroup of $Y$ for every $p'$-element $y$.

\section{Proof of the theorem}
\label{sec:proof}

Assume, for a contradiction, that $G$ is a nonsoluble counterexample to
Theorem~\ref{thm:main}.  Write the two complete components of $\Gamma_p(G)$ as
$X_1$ and $X_2$, and choose the maximal class $B$ so that $|B|\in X_2$.
Proposition~\ref{prop:disconnected-structure} reduces us to the case in which
$p$ divides a vertex of $X_2$.

Let $M$ be the subgroup supplied by
Proposition~\ref{prop:disconnected-structure}.  Since $X_1$ is nonempty and
every class whose size lies in $X_1$ occurs among the defining generators of
$M$, the subgroup $M$ is noncentral.

\begin{lemma}\label{lem:choice-of-x}
There exist a prime $r\ne p$ and a noncentral $r$-element $x\in M$ such that
\[
 r\mid |B|,
 \qquad
 |x^G|\in X_1,
 \qquad
 \gcd(|x^G|,pr)=1.
\]
\end{lemma}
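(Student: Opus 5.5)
Start from a $p$-regular class $D=y^G$ with $|D|\in X_1$; such a class exists because $X_1$ is nonempty. Since $X_1$ and $X_2$ are distinct components and $|B|\in X_2$, we have $\gcd(|D|,|B|)=1$. So $y$ is one of the defining generators of $M$, and $y\in M$. Decompose the $p'$-element $y$ into commuting prime-power components $y=\prod_s y_s$, where each $y_s$ is an $s$-element with $s\ne p$ and each $y_s$ is a power of $y$. Then $C_G(y)=\bigcap_s C_G(y_s)$, and $C_G(y)\le C_G(y_s)$ for each $s$. Hence $|y_s^G|$ divides $|y^G|$. Since $y$ is noncentral ($|D|>1$), some $y_s$ is noncentral. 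Fix such an $s$, and put $r=s$ and $x=y_s$. Then $x\in M$ is a noncentral $r$-element, $r\ne p$, and $|x^G|>1$ divides $|D|$. Because $\Gamma_p(G)$ has complete components, $|x^G|$ shares a prime with $|D|$ and therefore lies in $X_1$.

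The divisibility $|x^G|\mid |D|$ also gives $p\nmid |x^G|$ for free. Indeed, $X_1$ is the component not containing the largest class size, and $p$ divides a vertex of $X_2$ in our reduction. If $p$ divided $|x^G|$, it would be a common prime of vertices in $X_1$ and $X_2$, which is impossible. Thus $\gcd(|x^G|,p)=1$.

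The remaining conditions are $r\mid |B|$ and $r\nmid |x^G|$, which follow from Proposition~\ref{prop:disconnected-structure}.
\begin{itemize}
\item $x\notin Z_{p'}$, since $x$ is noncentral. So the image of $x$ in $M/Z_{p'}$ is a nontrivial $r$-element, whence $r\in\pi(M/Z_{p'})\subseteq\pi(|B|)$, giving $r\mid |B|$.
\item Then $r\nmid |x^G|$ holds because $|x^G|\in X_1$ and $|B|\in X_2$ have coprime sizes: both $|x^G|$ and $|B|$ are vertices in different components, so $\gcd(|x^G|,|B|)=1$.
\end{itemize}
Together these give $\gcd(|x^G|,pr)=1$.

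The only point requiring care is the first paragraph. There we must check that some prime-power component of $y$ is genuinely noncentral and that its class size remains a vertex of the \emph{same} component. Completeness of the components together with $|x^G|\mid|D|$ handles this.
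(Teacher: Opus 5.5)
Your proof is correct and follows the paper's argument. You take an element of $M$ whose class size lies in $X_1$ and pass to a noncentral primary component $x$ with $|x^G|$ dividing that class size, obtain $r\mid|B|$ from $\pi(M/Z_{p'})\subseteq\pi(|B|)$, and exclude $p$ and $r$ from $|x^G|$ because no prime divides vertices in both components. (A minor point: completeness of the components is not needed to place $|x^G|$ in $X_1$, since $1<|x^G|\mid|D|$ already makes the two vertices equal or adjacent.)
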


\begin{proof}
Choose a $p$-regular element $d\in G$ such that $|d^G|\in X_1$.  Since
$|d^G|$ and $|B|$ lie in different components,
$\gcd(|d^G|,|B|)=1$, so the class $d^G$ occurs among the defining generators
of $M$.  Thus $d\in M$, and $d$ is noncentral.

Write the primary decomposition of $d$ in the finite abelian group $M$ as
$d=\prod_{\ell}d_{\ell}$, where $d_{\ell}$ is an $\ell$-element.  At
least one primary component is noncentral.  Choose such a component and write
it as $x=d_r$.  Since $x$ is a power of $d$, one has
$C_G(d)\le C_G(x)$, and hence $|x^G|\mid |d^G|$.
The element $x$ is noncentral, so $|x^G|>1$.  Therefore its class-size vertex
lies in the same connected component as $|d^G|$, namely $X_1$.

Because $M\le O_{p'}(G)$ and $Z_{p'}\le M$, one has
$M\cap Z(G)=Z_{p'}$.  Thus $x\notin Z_{p'}$, and the image of $x$ in
$M/Z_{p'}$ is a nonidentity $r$-element.  The inclusion
$\pi(M/Z_{p'})\subseteq\pi(|B|)$ in
Proposition~\ref{prop:disconnected-structure} gives $r\mid |B|$.  Since $M$
is a $p'$-group,
$r\ne p$.  Finally, $r$ divides the $X_2$-vertex $|B|$, while $p$ divides
some vertex of $X_2$.  No prime can divide vertices in both components, so
neither $p$ nor $r$ divides $|x^G|$.
\end{proof}

Fix $r$ and $x$ as in Lemma~\ref{lem:choice-of-x}, and put
$\sigma=\{p,r\}$.

\begin{lemma}\label{lem:Cx-mod-M}
The quotient $C_G(x)/M$ is a $\sigma$-group.
\end{lemma}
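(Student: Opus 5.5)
The plan is a direct contradiction argument: assume some prime $q\notin\sigma$ divides $|C_G(x)/M|$, build from it a $p$-regular element whose class size lies in $X_1$, and conclude that this element, and hence a $q$-element outside $M$, lies in $M$.

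First note $M\le C_G(x)$, because $M$ is abelian and $x\in M$, so the quotient makes sense. Suppose a prime $q\notin\sigma$ divides $[C_G(x):M]$. Let $Q$ be a Sylow $q$-subgroup of $C_G(x)$. Since $q$ divides $[C_G(x):M]$, we have $|Q|>|Q\cap M|$, so $Q\not\le M$. Choose a $q$-element $y\in C_G(x)$ with $y\notin M$; in particular $y\ne 1$.

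Next I would form $z=xy$. The elements $x$ and $y$ commute, $x$ is an $r$-element, $y$ is a $q$-element, and $q\ne r$. Hence $x$ and $y$ are the $r$-part and $q$-part of $z$, and each is a power of $z$. Since $q,r\ne p$, the element $z$ is $p$-regular. Moreover $C_G(z)=C_G(x)\cap C_G(y)\le C_G(x)$, so $|x^G|$ divides $|z^G|$. As $x$ is noncentral, $|x^G|>1$, so $|z^G|>1$ is a vertex of $\Gamma_p(G)$. Every prime divisor of $|x^G|$ divides $|z^G|$, so $|z^G|$ either equals $|x^G|$ or is adjacent to it. Thus $|z^G|\in X_1$, using $|x^G|\in X_1$ from Lemma~\ref{lem:choice-of-x}.

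Since $X_1$ and $X_2$ are different components and $|B|\in X_2$, we get $\gcd(|z^G|,|B|)=1$. The class $z^G$ is therefore one of the defining generators of $M$ in Proposition~\ref{prop:disconnected-structure}, so $z\in M$. Then $y$, being a power of $z$, lies in $M$, contradicting the choice of $y$. Hence no prime outside $\sigma$ divides $|C_G(x)/M|$, as required.

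The only delicate point is the claim that $x$ and $y$ are powers of $z=xy$, so that $y\in M$ follows from $z\in M$. This is exactly where $q\ne r$ is used: it guarantees that $x$ and $y$ are the distinct primary components of the commuting product $z$. The argument does not use $q\ne p$ except to keep $z$ $p$-regular.
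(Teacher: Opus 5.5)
Your proof is correct, and it reaches the contradiction by a slightly different, more economical route than the paper. The paper takes an $s$-element $y\in C_G(x)\setminus M$ and first shows that $y$ is noncentral, using $Z_{p'}\le M$. It then applies the defining property of $M$ to $y$ itself to force $|y^G|\in X_2$. Finally it uses both divisibilities $|x^G|\mid|(xy)^G|$ and $|y^G|\mid|(xy)^G|$, so that $|(xy)^G|$ becomes a vertex joining $X_1$ and $X_2$. You instead use only the single divisibility $|x^G|\mid|z^G|$ to place $|z^G|$ in $X_1$, apply the defining property of $M$ to $z=xy$, and recover $y$ inside $M$. This needs neither the noncentrality of $y$ nor the inclusion $Z_{p'}\le M$. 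It also avoids the paper's harmless but unnecessary step of placing $M_s$ inside the chosen Sylow subgroup. The paper's version does have one merit: it reuses the pattern of forcing a class size into $X_2$ via the defining property of $M$, which recurs in the next lemmas. One small correction to your closing remark: recovering $y$ from $z$ does not require $y$ to be a power of $z$, since $y=x^{-1}z$ with $x,z\in M$. The place where $q\ne r$ is genuinely needed is $C_G(z)\le C_G(x)$, that is, that $x$ is a power of $z$, which is what gives $|x^G|\mid|z^G|$.
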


\begin{proof}
Suppose that a prime $s\notin\sigma$ divides $|C_G(x)/M|$.  Let $M_s$ be the
Sylow $s$-subgroup of the abelian group $M$.  Since $M_s$ is characteristic in
$M$, it is normal in $C_G(x)$, so it is contained in some
$P\in\operatorname{Syl}_s(C_G(x))$.  The assumption on the quotient gives
$P\nleq M$; choose $y\in P\setminus M$.  Then $y$ is an $s$-element and
$[x,y]=1$.

The element $y$ is noncentral: otherwise the normal $p'$-subgroup
$\langle y\rangle$ would give $y\in Z_{p'}\le M$.  If $|y^G|\in X_1$, then
$\gcd(|y^G|,|B|)=1$, so $y^G$ occurs among the defining generators of $M$,
again contradicting $y\notin M$.  Hence $|y^G|\in X_2$.

The commuting elements $x$ and $y$ have coprime orders.  Each is therefore a
power of $xy$, and consequently
\[
 C_G(xy)=C_G(x)\cap C_G(y).
\]
It follows that
\[
 |x^G|\mid |(xy)^G|,
 \qquad
 |y^G|\mid |(xy)^G|.
\]
The element $xy$ is $p$-regular, so $|(xy)^G|$ is a vertex of
$\Gamma_p(G)$.  The displayed divisibilities place this vertex in the same
connected component as both the $X_1$-vertex $|x^G|$ and the $X_2$-vertex
$|y^G|$, a contradiction.
\end{proof}

Set
\[
 K=C_G(M),
 \qquad
 H=G/K.
\]
The conjugation action of $G$ on $M$ has kernel $K$, and hence induces a
faithful action of $H$ on $M$.  For $u\in M$, write
\[
 H_u=\operatorname{Stab}_H(u)=\{gK\in H:u^g=u\}.
\]
Since $K\le C_G(u)$, this stabilizer satisfies
\[
 H_u=C_G(u)/K.
\]

\begin{lemma}\label{lem:Hall-stabilizer}
The subgroup $K$ is soluble, and $H_x$ is a Hall $\sigma$-subgroup of $H$.
In particular, $H$ is nonsoluble.
\end{lemma}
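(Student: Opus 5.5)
Since $M$ is abelian, $M\le C_G(M)=K$, and $K\le C_G(x)$ because $x\in M$. By Lemma~\ref{lem:Cx-mod-M}, $C_G(x)/M$ is a $\sigma$-group, so $K/M\le C_G(x)/M$ is a $\sigma$-group, that is, a $\{p,r\}$-group. Burnside's $p^aq^b$-theorem makes $K/M$ soluble. Since $M$ is abelian, $K$ is then soluble as an extension of $M$ by $K/M$. This settles the first assertion. Because $G$ is a nonsoluble counterexample and $K$ is soluble, $H=G/K$ must be nonsoluble, which gives the ``in particular'' clause.

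For the Hall property I use $H_x=C_G(x)/K$, which I check is a $\sigma$-subgroup of $\sigma'$-index. First, $H_x$ is a quotient of $C_G(x)/M$, since $M\le K\le C_G(x)$, so it is a $\sigma$-group by Lemma~\ref{lem:Cx-mod-M}. Second, the index satisfies
\[
 [H:H_x]=[G/K:C_G(x)/K]=[G:C_G(x)]=|x^G|,
\]
and Lemma~\ref{lem:choice-of-x} gives $\gcd(|x^G|,pr)=1$. So the index is a $\sigma'$-number, and $H_x$ is a Hall $\sigma$-subgroup of $H$.

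The step needing most care is the first, getting $M\le K\le C_G(x)$ in the correct order; it is what lets Lemma~\ref{lem:Cx-mod-M} control both $K/M$ and $H_x$. I do not expect any step here to be a serious obstacle. The real work of the Hall claim was done in Lemmas~\ref{lem:choice-of-x} and \ref{lem:Cx-mod-M}.
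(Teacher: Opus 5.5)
Your proposal is correct and follows the paper's proof essentially step for step. You use $M\le K\le C_G(x)$, then Lemma~\ref{lem:Cx-mod-M} with Burnside's $p^aq^b$-theorem for the solubility of $K$, and then $H_x=C_G(x)/K$ as a $\sigma$-group whose index $|x^G|$ is a $\sigma'$-number by Lemma~\ref{lem:choice-of-x}; your remark that $H_x$ is a quotient of $C_G(x)/M$ only spells out a step the paper leaves implicit.
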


\begin{proof}
Since $M$ is abelian and $x\in M$,
\[
 M\le K\le C_G(x).
\]
Lemma~\ref{lem:Cx-mod-M} shows that $K/M$ is a $\sigma$-group.  By
Burnside's $p^a q^b$-theorem, $K/M$ is soluble; since $M$ is abelian, $K$ is
soluble.  The group $G$ is nonsoluble, so $H=G/K$ is nonsoluble.

By the notation above, the stabilizer of $x$ in the induced action is
$H_x=C_G(x)/K$.
It is a $\sigma$-group by Lemma~\ref{lem:Cx-mod-M}, whereas
\[
 [H:H_x]=[G:C_G(x)]=|x^G|
\]
is a $\sigma'$-number by Lemma~\ref{lem:choice-of-x}.  Thus $H_x$ is a
Hall $\sigma$-subgroup of $H$.
\end{proof}

\begin{lemma}\label{lem:H-class-sizes}
Every $p$-regular element $h\in H$ has $\sigma$-number conjugacy-class size in
$H$.
\end{lemma}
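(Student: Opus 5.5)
The plan is to reduce the statement to class sizes in $G$, using the dichotomy already exploited in the proof of Lemma~\ref{lem:Cx-mod-M}: a noncentral $p$-regular element of $G$ lying outside $M$ has class size in $X_2$. I would then show that no prime outside $\sigma$ dividing $|H|$ can divide an $X_2$-vertex.

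First I identify the primes of $H$ outside $\sigma$. Suppose $s\notin\sigma$ and $s\mid |H|$. By Lemma~\ref{lem:Hall-stabilizer}, $H_x$ is a $\sigma$-group, so $s$ must divide $[H:H_x]=|x^G|$. By Lemma~\ref{lem:choice-of-x}, $|x^G|\in X_1$. Since no prime divides vertices in both components, $s$ divides no vertex in $X_2$.

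Next, let $h\in H$ be $p$-regular with $h\ne 1$; the case $h=1$ is trivial. I choose a preimage $g_0\in G$ and let $g$ be the $p'$-part of $g_0$. Because $g$ is a power of $g_0$ and $h$ is a $p'$-element, $gK=h$. Thus $g$ is $p$-regular, and $g\notin K$, so in particular $g\notin M$ since $M\le K$. I then repeat the argument from the proof of Lemma~\ref{lem:Cx-mod-M}:
\begin{itemize}
\item $g$ is noncentral, since otherwise $\langle g\rangle$ is a normal $p'$-subgroup and $g\in Z_{p'}\le M$;
\item $|g^G|\notin X_1$, since otherwise $g^G$ would be among the defining generators of $M$.
\end{itemize}
Hence $|g^G|\in X_2$.

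Finally, $C_G(g)K/K\le C_H(h)$, so $|h^H|=[H:C_H(h)]$ divides $[G:C_G(g)K]$, which divides $|g^G|$. Take any prime $s\notin\sigma$. If $s\nmid |H|$, then $s\nmid |h^H|$ trivially. If $s\mid|H|$, the first step shows $s\nmid|g^G|$, and therefore $s\nmid |h^H|$. So $|h^H|$ is a $\sigma$-number.

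The only delicate point is the first step, which relies on the fact that $H_x$ is a $\sigma$-group (and not merely a subgroup of $\sigma'$-index). This is exactly what Lemma~\ref{lem:Hall-stabilizer} provides.
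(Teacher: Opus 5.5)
Your proof is correct and follows essentially the same route as the paper's. You lift $h$ to the $p'$-part of a preimage, show that this lift has class size in $X_2$, bound $|h^H|$ by $|g^G|$, and use the fact that $H_x$ is a $\sigma$-group to force every prime of $|H|$ outside $\sigma$ to divide the $X_1$-vertex $|x^G|$; the differences (deriving noncentrality from $Z_{p'}\le M$ rather than $Z(G)\le K$, and isolating the prime-identification step first) are only cosmetic.
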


\begin{proof}
Choose $g\in G$ with $h=gK$.  In the cyclic group $\langle g\rangle$, write
$g=g_p g_{p'}$, where $g_p$ and $g_{p'}$ are the commuting $p$-part and
$p'$-part of $g$.  The image $g_pK$ is a $p$-element of the $p'$-group
$\langle h\rangle$, so $g_p\in K$.  Hence $y=g_{p'}$ is a $p$-regular
lift of $h$.

The assertion is immediate if $h=1$, so assume that $h\ne1$.  Then $y\notin K$.
Since $M\le K$ and $Z(G)\le K$, the element $y$ is neither in $M$ nor central.
If $|y^G|\in X_1$, then $\gcd(|y^G|,|B|)=1$, so the defining property of
$M$ would give $y\in M$.  Therefore $|y^G|\in X_2$.

The inclusion $C_G(y)K/K\le C_H(h)$ gives
\[
|h^H|=[H:C_H(h)]\mid[H:C_G(y)K/K]=[G:C_G(y)K]\mid[G:C_G(y)]=|y^G|.
\]
Suppose that a prime $q\notin\sigma$ divides $|h^H|$.  Then $q$ divides
$|y^G|$ and also $|H|$.  Since $H_x$ is a $\sigma$-group,
$q\nmid |H_x|$, and hence $q\mid [H:H_x]=|x^G|$.
Thus $q$ divides both the $X_1$-vertex $|x^G|$ and the $X_2$-vertex
$|y^G|$, contradicting the disconnection of $\Gamma_p(G)$.  Therefore every
prime divisor of $|h^H|$ lies in $\sigma$.
\end{proof}

\begin{proof}[Completion of the proof of Theorem~\ref{thm:main}]
Let $q\in\pi(H)\setminus\sigma$.  For every $p'$-element $h\in H$,
Lemma~\ref{lem:H-class-sizes} gives $q\nmid |h^H|$.  Since $q\ne p$,
Theorem~\ref{thm:Dolfi-Lucido} yields that $O^{q'}(H)$ is soluble.

Suppose that $S$ is a nonabelian composition factor of $H$.  For every
$q\in\pi(H)\setminus\sigma$, the subgroup $O^{q'}(H)$ is soluble, so $S$ must
occur as a composition factor of the $q'$-group $H/O^{q'}(H)$.  Hence
$q\nmid |S|$ for every $q\in\pi(H)\setminus\sigma$, and therefore
\[
 \pi(S)\subseteq\{p,r\}.
\]
Burnside's $p^a q^b$-theorem makes $S$ soluble, contradicting the fact that a
nonabelian composition factor is nonabelian simple.  Thus $H$ is soluble,
contrary to Lemma~\ref{lem:Hall-stabilizer}.  This contradiction proves the
theorem.
\end{proof}

\section*{Declaration of AI use}
OpenAI's ChatGPT was used during the development of this work for assistance
with literature searches, exploration and verification of mathematical arguments,
and manuscript preparation.  All mathematical claims, proofs, citations, and the
final text were independently checked and approved by the author, who takes full
responsibility for the work.

\end{document}